\documentclass[11pt]{article}
\usepackage[utf8]{inputenc}
\usepackage[T1]{fontenc}
\usepackage{amsmath, amssymb, amsthm}
\usepackage{geometry}
\usepackage{hyperref}

\newcommand{\R}{\mathbb{R}}
\newcommand{\N}{\mathbb{N}}
\newcommand{\Rp}{\R^n_+}
\newcommand{\pt}{\partial}
\newcommand{\eps}{\varepsilon}
\DeclareMathOperator{\dv}{div}

\newtheorem{theorem}{Theorem}
\newtheorem{corollary}{Corollary}
\newtheorem{lemma}{Lemma}
\theoremstyle{remark}
\newtheorem{remark}{Remark}
\newtheorem{example}{Example}
\newtheorem{definition}{Definition}

\title{Apriori Estimates and Nonexistence Results for Some Quasilinear Problems in a Half-Space}
\author{Evgeny Galakhov \and Olga Salieva}
\date{\today}

\begin{document}
\maketitle

\section*{Introduction}

Whether a nonlinear elliptic equation admits a positive solution on an unbounded
domain is an old question, and the answer is often that it does not. Statements of
this kind go back to Gidas and Spruck \cite{GS81} and are usually called
Liouville-type theorems. They matter for more than their own sake: a priori bounds
for solutions on bounded domains, and with them existence theory through degree or
blow-up methods, typically rest on knowing that the limiting problem on $\R^n$ or on
a half-space has no nontrivial solution. A nonexistence theorem in the limit is what
lets a rescaling argument convert local information near a bad point into a uniform
bound, so the two questions are really one.

The half-space $\Rp=\{x_n>0\}$ sits between the whole space and a bounded domain,
and it is the natural setting for the behaviour of a solution near a boundary point.
If one rescales a problem posed on a smooth bounded domain around such a point, the
limiting equation lives on $\Rp$ and carries a Dirichlet condition on the flat
boundary, while the rescaled solutions keep a uniform Lipschitz bound. This is the
geometry we work in, and it is why we ask throughout for a bounded gradient. The same
class of problems arises in the modelling of nonlinear diffusion and non-Newtonian
fluid flow, where $-\Delta_p$ replaces the Laplacian and the exponent $p$ records how
the flux responds to the gradient.

For the model Dirichlet problem
\[
-\Delta_p u = u^q \ \text{ in } \Rp,\qquad u=0 \ \text{ on } \pt\Rp,
\]
the answer depends on $p$, $q$ and the dimension $n$ in a fairly delicate way. When
$p=2$ the equation is of Lane--Emden type, and the moving-plane method used by
Dancer \cite{Dancer} and its later refinements \cite{ChenLinZou} settled nonexistence
over wide ranges of $q$; Dupaigne, Sirakov and Souplet \cite{DSS} eventually closed
the semilinear case altogether, excluding positive solutions for every $q>1$. The
quasilinear case $p\neq2$ is more resistant, mainly because the moving-plane and
Kelvin-transform arguments that carry the semilinear theory do not transfer in the
same shape: the $p$-Laplacian is neither linear nor conformally covariant, and its
solutions are in general no better than $C^{1,\alpha}_{loc}$. Farina, Montoro, Riey
and Sciunzi \cite{FMRS,FMS,FMS2} worked around this by first proving that solutions
are monotone in $x_n$, and then, through a delicate test-function analysis, obtaining
nonexistence below an explicit critical exponent $q_c(n,p)$. Their bounds are sharp
in what they cover, but they are silent once $q$ reaches $q_c$, and this is the range
we want to enter.

Our route to it is deliberately elementary, and the method is the point of the paper
as much as the results are. Rather than the sharp integral machinery of Mitidieri and
Pohozaev, we use a single, self-improving a priori bound built from one structural
hypothesis on the operator: weak $p$-coercivity, in the sense isolated by D'Ambrosio
and Mitidieri \cite{DM0}. This asks only that
\[
{\cal A}(x,t,\xi)\cdot\xi\ \ge\ k\,|{\cal A}(x,t,\xi)|^{p'},\qquad p'=\tfrac{p}{p-1},
\]
with no upper growth condition on ${\cal A}$ and no regularity in $(t,\xi)$ beyond the
Carath\'eodory property. The condition is mild: the $p$-Laplacian satisfies it with
$k=1$, and so do the mean-curvature operator and a wide range of degenerate operators.
Its force is that it feeds on itself. Along any supersolution, weak $p$-coercivity
combines with Cauchy--Schwarz and the Lipschitz bound $|\nabla u|\le c_1$ to pin the
flux down pointwise,
\[
|{\cal A}(x,u,\nabla u)|\ \le\ k^{1-p}c_1^{p-1}\qquad\text{a.e. in }\Rp,
\]
without ever bounding ${\cal A}$ from above by hand. This is the self-improvement
observed in \cite{DM0}, and it is the whole engine of the argument.

From there the estimate follows from a single integration by parts. Testing the
supersolution inequality against one radial Lipschitz cutoff on a ball $B_{2R}$ and
inserting the pointwise flux bound turns the local mass of $f(u)$ into a capacity term
of order $R^{n-1}$; comparing this with the trivial lower bound $f(\inf_{B_R}u)\,
\omega_nR^n$ and passing to the generalized inverse of $f$ controls $\inf_{B_R}u$, and
the Lipschitz bound upgrades the infimum to a supremum. The radius $R$ is still free,
and balancing it against the nonlinearity produces an $L^\infty$ estimate for $u$ in
terms of its Lipschitz constant alone (Theorem~\ref{corr:thm1}). For $f(t)=t^q$ the
balanced exponent is $p/(q+1)$, exactly the number forced by the scaling
$u_\lambda(x)=\lambda^{p/(q-p+1)}u(\lambda x)$.

It is worth being clear about what this trades against the capacity method of
\cite{MP,BVP}. The integral estimate $\int_{B_R}u^q\le cR^{\,n-pq/(q-p+1)}$ used there
is genuinely sharper than our capacity bound, and we do not recover it. What we gain
is reach and transparency: the whole estimate rests on one integration by parts, it
never touches the structure of ${\cal A}$ beyond weak $p$-coercivity, and after
balancing $R$ it lands on the same critical exponent $p/(q+1)$ regardless. The same
argument therefore covers the mean-curvature operator and exponential nonlinearities
such as $f(t)=e^t-1$ with no change (Examples~\ref{ex1} and \ref{ex2}), cases where
no polynomial structure is available to feed a scaling argument.

From that bound we obtain Theorem~\ref{thm2} and, together with the growth lemma
Lemma~\ref{lem:growth}, Theorem~\ref{thm:case2}. Theorem~\ref{thm2} extends the known nonexistence range from
$q_c(n,p)$ up to $q_c(n-1,p)$ without assuming the solution is bounded, which is new
in the gap between these two exponents. A companion result (Theorem~\ref{thm:case1}),
independent of the bound and resting instead on the capacity estimate, excludes solutions that stay bounded below along
a hyperplane for every $q>p-1$; as a corollary
(Corollary~\ref{cor2}) we get an unconditional nonexistence result when the
nonlinearity is at least $(p-1)$-homogeneous near the origin. The remaining regime is
handled through a growth lemma (Lemma~\ref{lem:growth}): if the infimum of $u$
vanishes on every horizontal hyperplane, then $u$ has to grow at least like
$x_n^{1-\eps}$ along suitable vertical segments, and this growth, set against the a
priori bound, excludes a broad class of solutions (Theorem~\ref{thm:case2}). What
stays open afterwards is a single, sharply located case, described in
Remark~\ref{rem:open}.

The paper is arranged as follows. Section~1 fixes the hypotheses and states the main
results. Section~2 proves the a priori estimate and works out the model case together
with the mean-curvature and exponential examples. Section~3 proves the first two
nonexistence theorems and the corollary, and includes the reduction lemma that
disposes of the case $p\ge n$ outright. Section~4 contains the growth lemma and the
final theorem.

\section{Formulation of Problems and Main Results}

In this paper, following the framework of D'Ambrosio--Mitidieri \cite{DM0}, we consider nonlinear elliptic inequalities of the form
\begin{equation}\label{eq:0}
-\dv {\cal A}(x,u(x),\nabla u(x)) \ge f(u(x)), \quad \bigl(x=(x',x_n)\in \Rp=\{x_n>0\}\bigr),
\end{equation}
where $\cal A$ and $f$ belong to the classes introduced there (see hypotheses below).

\begin{center}
{\bf Hypotheses}
\end{center}

\begin{itemize}
\item[$({\cal A})$] Let $p>1$ and $p'=\tfrac{p}{p-1}$. The map ${\cal A}:\Rp \times [0,\infty) \times \R^n \to \R^n$ is a Carath\'eodory function
which is {\it weakly-$p$-coercive} (W-$p$-C) in the sense of \cite[Section 2]{DM0}: there exists $k>0$ such that
\begin{equation}\label{eq:1}
{\cal A}(x,t,\xi)\cdot\xi\ge k|{\cal A}(x,t,\xi)|^{p'}\mbox{ for all }(x,t,\xi)\in \Rp \times [0,\infty) \times \R^n
\end{equation}

\item[$(f)$] $f:[0,\infty)\to [0,\infty)$ is continuous, nondecreasing, and $\lim\limits_{t\to +\infty}f(t)=+\infty$. Its generalized
(right-continuous) inverse is
\begin{equation}\label{eq:2}
f^{-1}(s):=\sup\limits_{t\ge 0:f(t)\le s}t, s\ge 0,
\end{equation}
which is finite for every $s\ge 0$ precisely because $f(t)\to +\infty$.
\end{itemize}

As the model case, we can consider ${\cal A}(x,t,\xi)=|\xi|^{p-2}\xi$,
which corresponds to the $p$-Laplace operator $\Delta_p u=\dv(|\nabla u|^{p-2}\nabla u)$. Another interesting example is ${\cal A}(\xi)=\tfrac{\xi}{\sqrt{1+|\xi|^2}}$, which gives rise to the mean curvature operator $\dv\left(\tfrac{\nabla u}{\sqrt{1+|\nabla u|2}}\right)$. For the function $f$, we can take $f(t)=t^q$ with $q>0$ or $f(t)=e^t$. Our main result is as follows.

\begin{theorem}[a priori $L^\infty$ estimate]\label{corr:thm1}
Assume the hypotheses above. Let $u\in C^1(\Rp)\cap C(\overline{\Rp})$ be a nonnegative distributional
supersolution of $-\dv{\cal A}(x,u,\nabla u)\ge f(u)$ in $\Rp$, i.e.
\[
\int_{\Rp}{\cal A}(x,u,\nabla u)\cdot\nabla\psi\,dx\ \ge\ \int_{\Rp} f(u)\,\psi\,dx
\qquad\forall\,\psi\in C^\infty_0(\Rp),\ \psi\ge0,
\]
with $c_1:=\|\nabla u\|_{L^\infty(\Rp)}<\infty$ and $c_2:=\|u\|_{C(\pt\Rp)}<\infty$. Put $c_0:=2^n-1$.
Then for every $R>0$,
\begin{equation}\label{corr:eq5}
\|u\|_{C(\Rp)}\ \le\ \max\!\left(\,c_2+c_1R,\ \ f^{-1}\!\Big(\tfrac{c_0\,k^{1-p}c_1^{p-1}}{R}\Big)+2c_1R\right).
\end{equation}
If moreover $f$ is strictly increasing with $f(0)=0$ and $c_1>0$, then
\begin{equation}\label{corr:eq6}
R\,f(c_1R)=c_0\,k^{1-p}c_1^{p-1}
\end{equation}
has a unique root $R_*\in(0,\infty)$, and taking $R=R_*$ in \eqref{corr:eq5} gives
\begin{equation}\label{corr:eq7}
\|u\|_{C(\Rp)}\ \le\ \max\!\left(c_2+c_1R_*,\ 3c_1R_*\right).
\end{equation}
\end{theorem}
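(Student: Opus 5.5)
The proof uses the pointwise flux bound from weak $p$-coercivity, a single test with a Lipschitz cutoff on a ball centred slightly above the point of interest, and a two-case comparison with the boundary values.

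\emph{Step 1 (pointwise flux bound).} Put $M:=k^{1-p}c_1^{p-1}$. At a.e.\ $x$, write $\xi=\nabla u(x)$ and $A={\cal A}(x,u,\nabla u)$. Combining \eqref{eq:1} with Cauchy--Schwarz gives
\[
k|A|^{p'}\le A\cdot\xi\le |A|\,c_1 .
\]
If $A\neq0$, divide by $|A|$ to get $|A|^{p'-1}\le c_1/k$. Since $(p'-1)^{-1}=p-1$, this means $|A|\le M$ a.e.\ in $\Rp$.

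\emph{Step 2 (local integral estimate).} Fix $R>0$ and a centre $y$ with $y_n>2R$, so that $\overline{B_{2R}(y)}\subset\Rp$. Take the radial Lipschitz cutoff $\psi$ with $\psi=1$ on $B_R(y)$, $\psi=0$ outside $B_{2R}(y)$, and linear in $|x-y|$ in between. Then $|\nabla\psi|\le 1/R$, and $\nabla\psi$ is supported in the annulus.

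Mollifying $\psi$ gives nonnegative $C_0^\infty(\Rp)$ test functions that converge in $W^{1,1}$ with uniformly bounded gradients. Since $|A|\le M$ and $f(u)$ is locally bounded, the supersolution inequality passes to the limit and holds for $\psi$ itself. Hence
\[
f\bigl(\textstyle\inf_{B_R(y)}u\bigr)\,\omega_nR^n\le\int f(u)\psi\le \int |A||\nabla\psi|\le \frac{M}{R}\,\omega_n\bigl((2R)^n-R^n\bigr)=c_0M\,\omega_nR^{n-1},
\]
where the first inequality uses that $f$ is nondecreasing. Therefore $f(\inf_{B_R(y)}u)\le c_0M/R$. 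By the definition \eqref{eq:2} of the generalized inverse, this gives
\[
\inf_{B_R(y)}u\le f^{-1}(c_0M/R).
\]

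\emph{Step 3 (from infimum to pointwise bound, two cases).} Fix $x_0\in\Rp$.

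If $x_{0,n}\le R$, compare $x_0$ with its projection onto $\pt\Rp$ and use the Lipschitz bound: $u(x_0)\le c_2+c_1R$.

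If $x_{0,n}>R$, set $y=x_0+Re_n$, so that $y_n>2R$ and Step 2 applies. Every $z\in B_R(y)$ satisfies $|x_0-z|\le |x_0-y|+R=2R$. Hence
\[
u(x_0)\le \inf_{B_R(y)}u+2c_1R\le f^{-1}(c_0M/R)+2c_1R .
\]
Taking the supremum over $x_0$ gives \eqref{corr:eq5}.

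Shifting the centre upward is the step I expect to need most care. Centring the ball at $x_0$ itself would require $x_{0,n}>2R$, which only yields $c_2+2c_1R$ in the first case. The shift trades that for $2c_1R$ in the interior term, which is exactly the form stated in \eqref{corr:eq5}.

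\emph{Step 4 (choice of $R$).} Assume $f$ is strictly increasing with $f(0)=0$, and $c_1>0$. The function $g(R)=Rf(c_1R)$ is continuous and strictly increasing on $(0,\infty)$. It tends to $0$ as $R\to0^+$, since $f(c_1R)\to f(0)=0$, and to $\infty$ as $R\to\infty$. Hence \eqref{corr:eq6} has a unique root $R_*$.

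At $R=R_*$ we have $c_0M/R_*=f(c_1R_*)$. Because $f$ is strictly increasing, the set $\{t\ge0: f(t)\le f(c_1R_*)\}$ is exactly $[0,c_1R_*]$, so $f^{-1}(f(c_1R_*))=c_1R_*$. Substituting into \eqref{corr:eq5} gives $f^{-1}(c_0M/R_*)+2c_1R_*=3c_1R_*$, which is \eqref{corr:eq7}.
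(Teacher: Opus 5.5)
Your proof is correct and follows the paper's argument step for step: the flux bound $|{\cal A}(x,u,\nabla u)|\le k^{1-p}c_1^{p-1}$ from weak $p$-coercivity, one test against the radial cutoff on $B_{2R}$, the generalized inverse, the $2c_1R$ oscillation term, the boundary strip $\{x_n\le R\}$, and the balancing root $R_*$. Your centre $y=x_0+Re_n$ is a pointwise version of the paper's covering $\bigcup_{x_{*,n}\ge 2R}B_R(x_*)=\{x_n>R\}$. The strict inequality $y_n>2R$ also keeps the support of the cutoff compact in $\Rp$, a point the paper's boundary case $x_{*,n}=2R$ glosses over.
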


The proof of this theorem and concrete results for the special cases listed above can be found in Section 2. Here we formulate only a corollary for the model case that we will need in the sequel.

\begin{corollary}[model case]\label{corr:cor1}
Let ${\cal A}(x,t,\xi)=|\xi|^{p-2}\xi$ (so $\dv{\cal A}(x,u,\nabla u)=\Delta_p u$) and $f(t)=t^q$ with $q>p-1$. Then ${\cal A}$
is weakly $p$-coercive with $k=1$, since ${\cal A}(\xi)\cdot\xi=|\xi|^p=|{\cal A}(\xi)|^{p'}$. Equation \eqref{corr:eq6}
reads $R\,(c_1R)^q=c_0\,c_1^{p-1}$, i.e.\ $R_*^{\,q+1}=c_0\,c_1^{\,p-1-q}$, whence
\[
R_*=c_0^{\frac{1}{q+1}}c_1^{-\frac{q-p+1}{q+1}},\qquad c_1R_*=c_0^{\frac{1}{q+1}}c_1^{\frac{p}{q+1}},
\]
and \eqref{corr:eq7} becomes
\begin{equation}\label{corr:eq7b}
\|u\|_{C(\Rp)}\ \le\ \max\!\left(c_2+(2^n-1)^{\frac{1}{q+1}}c_1^{\frac{p}{q+1}},\ \
3\,(2^n-1)^{\frac{1}{q+1}}c_1^{\frac{p}{q+1}}\right).
\end{equation}
The exponent $\tfrac{p}{q+1}$ is the one dictated by the scaling $u_\lambda(x)=\lambda^{\frac{p}{q-p+1}}u(\lambda x)$.
\end{corollary}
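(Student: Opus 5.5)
The plan is to apply Theorem~\ref{corr:thm1} directly: first check that the model pair $({\cal A},f)$ satisfies every hypothesis, including the extra ones needed for \eqref{corr:eq6}, then solve \eqref{corr:eq6} explicitly, and finally substitute into \eqref{corr:eq7}.

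\emph{Checking the hypotheses.} For ${\cal A}(\xi)=|\xi|^{p-2}\xi$ we have $|{\cal A}(\xi)|=|\xi|^{p-1}$. Since $(p-1)p'=p$, this gives $|{\cal A}(\xi)|^{p'}=|\xi|^p={\cal A}(\xi)\cdot\xi$, so \eqref{eq:1} holds with equality for $k=1$. The map is continuous in $\xi$ and independent of $(x,t)$, hence Carath\'eodory. For $f(t)=t^q$ with $q>0$, the function $f$ is continuous, strictly increasing, satisfies $f(0)=0$ and $f(t)\to\infty$, so $(f)$ holds and the second part of Theorem~\ref{corr:thm1} is available whenever $c_1>0$.

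\emph{The degenerate case $c_1=0$.} Here $u$ is a constant $c\ge0$. Testing the supersolution inequality gives $0\ge c^q\int\psi$, so $c=0$. Then $\|u\|_{C(\Rp)}=0$, and \eqref{corr:eq7b} holds trivially.

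\emph{Solving \eqref{corr:eq6} and substituting.} With $k=1$, equation \eqref{corr:eq6} reads $R\,c_1^qR^q=c_0c_1^{p-1}$, that is, $R^{q+1}=c_0c_1^{p-1-q}$. Its unique positive root is
\[
R_*=c_0^{\frac1{q+1}}c_1^{-\frac{q-p+1}{q+1}} .
\]
Multiplying by $c_1$ gives
\[
c_1R_*=c_0^{\frac1{q+1}}c_1^{1-\frac{q-p+1}{q+1}}=c_0^{\frac1{q+1}}c_1^{\frac p{q+1}} .
\]
Substituting this into \eqref{corr:eq7} with $c_0=2^n-1$ yields \eqref{corr:eq7b}. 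The hypothesis $q>p-1$ is not needed for this algebra. It only makes the exponent $-\frac{q-p+1}{q+1}$ of $c_1$ in $R_*$ negative, so that $R_*\to0$ as $c_1\to\infty$, and it makes the scaling exponent below finite and positive.

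\emph{The scaling remark.} Put $a=\frac{p}{q-p+1}$. A direct computation shows that $u_\lambda$ satisfies $-\Delta_pu_\lambda=u_\lambda^q$ whenever $u$ does, since both sides pick up the factor $\lambda^{a(p-1)+p}=\lambda^{aq}$. Under this scaling $\|u_\lambda\|_\infty=\lambda^a\|u\|_\infty$ and $\|\nabla u_\lambda\|_\infty=\lambda^{a+1}\|\nabla u\|_\infty$. Since $(a+1)\frac p{q+1}=\frac{q+1}{q-p+1}\cdot\frac p{q+1}=a$, the quantity $c_1^{p/(q+1)}$ scales exactly like $\|u\|_\infty$. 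So the exponent $\frac p{q+1}$ is the only one for which a bound of the form \eqref{corr:eq7b} is scale-invariant.

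There is no real obstacle here. The only points that need care are noting that $k=1$ makes the factor $k^{1-p}$ disappear, and treating $c_1=0$ separately, since Theorem~\ref{corr:thm1} requires $c_1>0$ for the root $R_*$.
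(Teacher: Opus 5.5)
Your proposal is correct and follows the paper's route. You verify weak $p$-coercivity with $k=1$ (so $k^{1-p}=1$), solve \eqref{corr:eq6} as $R_*^{q+1}=c_0c_1^{p-1-q}$, and substitute $c_1R_*=c_0^{1/(q+1)}c_1^{p/(q+1)}$ into \eqref{corr:eq7}. Your separate treatment of $c_1=0$ and your explicit check of the scaling exponent are sound; the paper covers the first only parenthetically, in Step~5 of the proof of Theorem~\ref{corr:thm1}, and simply asserts the second.
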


The bound $|\nabla u|\in C(\Rp)$ restricts the class of solutions we treat, and we prefer to
be plain about it. That condition is natural in the situation that
motivates our research: when a special case of \eqref{eq:0} considered below is obtained by rescaling a problem on a smooth bounded
domain near a boundary point, the rescaled solutions carry a uniform Lipschitz bound. Two consequences of the bound will be used. Since $u=0$ on $\pt\Rp$,
\begin{equation}\label{eq:linbound}
0\le u(x',x_n)\le c_1\,x_n,
\end{equation}
so a solution grows at most linearly in $x_n$; and the same bound controls the oscillation of $u$
on balls. Combining this with the key observation of \cite{DM0} that weak
$p$-coercivity \eqref{eq:1} self-improves to a pointwise bound on $\cal A$ evaluated along the solution, we obtain estimates \eqref{corr:eq5} and \eqref{corr:eq7}.

In Sections 3 and 4, we use the bounds of Theorem \eqref{corr:thm1} to obtain new nonexistence results for the quasilinear Dirichlet problem
\begin{equation}\label{eq:1a}
\begin{cases}
(-\Delta_p u)(x) = f(u(x)) & \bigl(x=(x',x_n)\in \Rp\bigr),\\[2pt]
u(x)=0 & (x\in\pt\Rp),
\end{cases}
\end{equation}
where the operator $\Delta_p$ is defined as above, $n\ge 2$, $p>1$, and $f$ satisfies:

\medskip
\noindent$(h_f)$ $f$ is strictly positive and locally Lipschitz continuous on
$\R_+$, continuous at $0$,
\begin{equation}\label{eq:2a}
\exists \ell=\lim_{t\to 0^+}\frac{f(t)}{t^{p-1}} \in [0, +\infty),
\end{equation}
\medskip

and there are constants $c>0$ and $q>p-1$ with
\begin{equation}\label{eq:3a}
f(t)\ge c\,t^{q}\qquad\forall\, t>0.
\end{equation}
The model case is $f(t)=t^q$ with $q>p-1$.

We look for $u\in W^{1,p}_{loc}(\Rp)\cap L^{q}_{loc}(\Rp)$ that satisfy \eqref{eq:1a} in the
distributional sense and have an essentially bounded gradient, $|\nabla u|\in L^\infty(\Rp)$.
It is known that under our assumptions such solutions are in fact $C^{1,\alpha}_{loc}(\overline{\Rp})$
with some $\alpha>0$, therefore $|\nabla u|\in C(\Rp)$ similarly to Section 2.

Condition $(h_f)$ and the class of solutions under consideration are taken from \cite{FMS},\cite{FMRS}
(in case $1<p<2$, where \eqref{eq:2} follows from $f(0)=0$ and the Lipschitz continuity of $f$ at 0) and \cite{FMS2} (in case $p\ge 2$,
where \eqref{eq:2} is assumed explicitly).

Problem \eqref{eq:1a} is of interest in itself and also through its connection with solvability of
similar equations in smooth bounded domains, which reduce to \eqref{eq:1a} by rescaling near
boundary points. This theory bears on the modeling of nonlinear diffusion, non-Newtonian fluid
flows, and related processes.

In the semilinear case $p=2$, nonexistence for \eqref{eq:1a} in various classes and ranges of $q$
was obtained in \cite{GS81,Dancer}, and for every $q>1$ in \cite{ChenLinZou,DSS}; a wider class of
semilinear problems was treated recently in \cite{DM1,DM2}. The case $1<p<2$ was studied in
\cite{FMRS,FMS} and $p>2$ in \cite{FMS2}, as well as by the present authors in \cite{GS16,Sal}.
It is known that such solutions $u(x)=u(x',x_n)$, when they exist, are strictly monotone in $x_n$ for each fixed
$x'\in\R^{n-1}$, with $\pt u_{x_n}>0$, and hence belong to $C^{2,\beta}_{loc}(\Rp)$ with some $\beta>0$.
In \cite{FMRS,FMS,FMS2} exponents $q_c=q_c(n,p)$ were found for which \eqref{eq:1}
with $f(u)=u^q$ has no solution in these classes when $p-1<q<q_c$:
\[
q_c(n,p)=
\begin{cases}
\infty & \text{for } n\le \dfrac{p(p+3)}{p-1},\\[8pt]
\dfrac{[(p-1)n-p]^2+p^2\bigl[p-2-(p-1)n+2\sqrt{(p-1)(n-1)}\bigr]}
      {(n-p)\bigl[(p-1)n-p(p+3)\bigr]} & \text{for } n>\dfrac{p(p+3)}{p-1}
\end{cases}
\]
and no bounded solution in the same classes for $p-1<q<q_c(n-1,p)$.
However, these estimates are governed by $q_c(n,p)$ or $q_c(n-1,p)$ and say nothing once $q\ge q_c$, which is the range we address. For $p=2$
the full nonexistence is known for every $q>1$ \cite{DSS}, so the case of interest is $p\ne2$.

First of all, by Corollary~\ref{corr:cor1} with $c_2=0$ (the Dirichlet condition kills the boundary term), any weak nonnegative solution
of \eqref{eq:1a} with a globally bounded gradient satisfies
\begin{equation}\label{eq:apriori0}
\sup_{\Rp} u\ \le\ M,\qquad
M:=3\,(2^n-1)^{\frac{1}{q+1}}\,c\,^{-\frac{1}{q+1}}\,c_1^{\frac{p}{q+1}} ,
\end{equation}
where $c>0$ is the constant of $f(t)\ge c\,t^q$ in \eqref{eq:3a}. Combining this estimate with the nonexistence results of
\cite{FMRS,FMS,FMS2} for bounded solutions, we immediately extend the nonexistence range for any solutions in the same class
to $p-1<q<q_c(n-1,p)$:

\begin{theorem}\label{thm2}
Under $(h_f)$ and \eqref{eq:3a}, problem \eqref{eq:1a} has no non-negative nontrivial
solution $u\in W^{1,p}_{loc}(\Rp)\cap L^{q}_{loc}(\Rp)$ with $|\nabla u|\in L^\infty(\Rp)$
 for any $p>1$ and $p-1<q<q_c(n-1,p)$.
\end{theorem}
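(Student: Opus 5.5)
The argument reduces the unbounded case to the bounded one by means of the a priori estimate \eqref{eq:apriori0}, and then cites the nonexistence theorem for bounded solutions from \cite{FMRS,FMS,FMS2}. Let $u$ be a nonnegative nontrivial solution of \eqref{eq:1a} in the stated class with $c_1=\|\nabla u\|_{L^\infty(\Rp)}<\infty$. We must check that $u$ satisfies the hypotheses of Theorem~\ref{corr:thm1}, in the form of Corollary~\ref{corr:cor1}, after adapting it to a general $f$ with $f(t)\ge c\,t^q$.

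\emph{Regularity.} By the $C^{1,\alpha}_{loc}(\overline{\Rp})$ regularity recalled above, we have $u\in C^1(\Rp)\cap C(\overline{\Rp})$. The Dirichlet condition gives $c_2=\|u\|_{C(\pt\Rp)}=0$.

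\emph{Supersolution property.} The function $u$ is a distributional solution of $-\Delta_p u=f(u)$. Since $f(u)\ge c\,u^q$, it is also a distributional supersolution of $-\Delta_p u\ge \tilde f(u)$ with $\tilde f(t)=c\,t^q$. This $\tilde f$ is continuous, strictly increasing, tends to $+\infty$ and satisfies $\tilde f(0)=0$. The operator ${\cal A}(\xi)=|\xi|^{p-2}\xi$ is weakly $p$-coercive with $k=1$.

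\emph{Applying the estimate.} If $c_1=0$, then $u$ is constant, hence $u\equiv0$ by the boundary condition, which contradicts nontriviality. So $c_1>0$, and \eqref{corr:eq7} applies. Solving $R\,c\,(c_1R)^q=c_0c_1^{p-1}$ gives
\[
c_1R_*=(c_0/c)^{1/(q+1)}c_1^{p/(q+1)}.
\]
Because $c_2=0$, the maximum in \eqref{corr:eq7} is $3c_1R_*$, which yields exactly \eqref{eq:apriori0}: $\sup u\le M<\infty$. A further point is that the test class in Theorem~\ref{corr:thm1} consists of $C_0^\infty(\Rp)$ functions. The proof there uses a cutoff on $B_{2R}$ that may meet the boundary, but that situation is exactly what the theorem covers, so we take it as given.

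\emph{Conclusion.} Now $u$ is a bounded nonnegative nontrivial solution of \eqref{eq:1a} under $(h_f)$, with bounded gradient, in the class of \cite{FMRS,FMS,FMS2}. The Liouville theorem for bounded solutions in those papers holds for $p-1<q<q_c(n-1,p)$: \cite{FMRS,FMS} cover $1<p<2$ and \cite{FMS2} covers $p\ge2$, where \eqref{eq:2a} is part of $(h_f)$. It excludes such $u$, which is a contradiction.

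The main obstacle is matching hypotheses exactly. The cited bounded-solution results are stated for $f$ satisfying $(h_f)$, possibly with additional structural assumptions such as a monotone lower bound. One must verify that the exponent $q$ in \eqref{eq:3a} is the one governing the range $q_c(n-1,p)$ in those papers, and that their solution class, $W^{1,p}_{loc}\cap L^q_{loc}$ with $|\nabla u|\in L^\infty$, coincides with ours. I would check this first, case by case for $p<2$ and $p\ge2$. Where the cited result is stated only for $f(t)=t^q$, I would restrict the theorem accordingly.
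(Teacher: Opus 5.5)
Your proposal is essentially the paper's proof. The paper also applies Theorem~\ref{corr:thm1} with $c_2=0$ to get $\sup u\le M$, implicitly with $c\,t^q$ in place of $f$ (you rightly make this explicit, since $(h_f)$ does not make $f$ monotone), and then cites the bounded-solution Liouville theorems of \cite{FMS,FMRS,FMS2}; your worry about the cutoff meeting the boundary is moot, because that proof only uses balls with $B_{2R}(x_*)\subset\Rp$.

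Your closing caveat is legitimate, and the paper does not address it either: it describes the cited results as concerning $f(u)=u^q$, so for a general $f$ satisfying $(h_f)$ and \eqref{eq:3a} that final citation is exactly the step that remains to be checked.
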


This result is new for $q_c(n,p)\le q<q_c(n-1,p)$ without assuming that $u$ is bounded.

Our next result on problem \eqref{eq:1a} is in fact independent of Theorem \ref{corr:thm1}. Rather, it is based on the capacity estimates of \cite{MP,BVP} and the monotonicity results of \cite{FMRS,FMS,FMS2}. It rules out a solution that stays bounded below along some hyperplane for $q\ge q_c$.

\begin{theorem}\label{thm:case1}
Under $(h_f)$ and \eqref{eq:3a}, for any $p>1$ and $q>p-1$, problem \eqref{eq:1a} has no non-negative nontrivial
solution $u\in W^{1,p}_{loc}(\Rp)\cap L^{q}_{loc}(\Rp)$ with $|\nabla u|\in L^\infty(\Rp)$ for which
\begin{equation}\label{eq:H1}
\inf_{x'\in\R^{n-1}} u(x',y_0)>0 \quad\text{for some } y_0>0.
\end{equation}
Equivalently, every such solution satisfies $\inf_{x'} u(x',y)=0$ for all $y>0$.
\end{theorem}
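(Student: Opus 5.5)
Assume, towards a contradiction, that $u$ is a nontrivial nonnegative solution with $|\nabla u|\in L^\infty(\Rp)$ and $m:=\inf_{x'}u(x',y_0)>0$. The argument has three steps: use monotonicity to keep $u$ large on a whole upper half-space, translate the problem there, and then apply the capacity method to that shifted problem.

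\textbf{Step 1: lower bound above the hyperplane.} By the results of \cite{FMRS,FMS,FMS2} quoted above, $u$ is strictly increasing in $x_n$. Hence
\[
u(x',x_n)\ge u(x',y_0)\ge m \qquad\text{for all } x_n\ge y_0 .
\]
So on the half-space $H:=\{x_n>y_0\}$ the function $u$ is bounded below by $m$. By \eqref{eq:3a} it satisfies
\[
-\Delta_p u\ge f(u)\ge c\,u^q\ge c\,m^{q}>0 \quad\text{in } H .
\]

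\textbf{Step 2: shift and test.} Translate so that $H$ becomes $\Rp$, setting $v(x',x_n)=u(x',x_n+y_0)$. Then $v\ge m$, $|\nabla v|\le c_1$, and $-\Delta_p v\ge c\,v^q$ in $\Rp$. No boundary condition is used. Take a cutoff $\psi_R\in C^\infty_0(\Rp)$ with the following properties:
\begin{itemize}
\item $\psi_R=1$ on the box $\{|x'|<R,\ R<x_n<2R\}$;
\item $\psi_R$ is supported in $\{|x'|<2R,\ R/2<x_n<3R\}$;
\item $|\nabla\psi_R|\le C/R$.
\end{itemize}
Test the inequality with $\psi_R$. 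Because $|\,|\nabla v|^{p-2}\nabla v\,|\le c_1^{p-1}$ (this is exactly the flux bound from the introduction with $k=1$), the left side is at most $C c_1^{p-1}R^{n-1}$. The right side is at least $c\,m^q\,|\{\psi_R=1\}|\ge c' m^q R^n$. Letting $R\to\infty$ gives $c' m^q R\le C c_1^{p-1}$, which is a contradiction.

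\textbf{Where the difficulty lies.} This version bypasses the sharp $R^{n-pq/(q-p+1)}$ capacity estimate entirely, because the positive lower bound $m$ makes the source term grow like $R^n$ while the flux term grows only like $R^{n-1}$. Moving the cutoff away from $\pt\Rp$ avoids any boundary terms. The main point to check is that the monotonicity result of \cite{FMRS,FMS,FMS2} is available in exactly this class ($W^{1,p}_{loc}$, bounded gradient, $(h_f)$). If that check failed, I would replace monotonicity by the Lipschitz bound: $u(x',x_n)\ge m/2$ for $|x_n-y_0|<m/(2c_1)$. That gives a positive lower bound only on a slab, so the argument would then require the genuine capacity estimate of \cite{MP,BVP} for an inequality on a slab-type region. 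Cutoffs that are large in $x'$ but confined to the slab give a source term of order $R^{n-1}$ against a flux term of order $R^{n-2}$ on the lateral sides plus $O(R^{n-1})$ on the thin faces. The thin faces do not win outright, so monotonicity is the route I would commit to.

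Finally, the ``equivalently'' clause is just the contrapositive. If $\inf_{x'}u(x',y)>0$ for some $y$, the theorem is violated. Conversely, since $u\ge0$, the only alternative is $\inf_{x'}u(x',y)=0$ for every $y>0$.
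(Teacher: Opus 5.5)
Your proof is correct, and it takes a genuinely different and more elementary route than the paper's.

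\textbf{The paper's route.} The paper uses the lower bound $u\ge m$ on $\{x_n>y_0\}$ only to lower the exponent. Since $u^q\ge m^{q-s}u^s$ there, a rescaled and translated $u$ becomes a nonnegative supersolution of $-\Delta_p v\ge v^s$ on a half-space, with $s\in(p-1,s_p)$ chosen in the subcritical range. Nonexistence then comes from Lemma~\ref{lem:reduction}. That is, it comes from the Mitidieri--Pohozaev integral estimate $\int_{B_R}v^s\le cR^{\,n-ps/(s-p+1)}$ with a negative exponent, plus monotonicity in $x_n$ to pass from admissible balls to cylinders reaching the boundary.

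\textbf{Your route.} You use $m$ directly as a constant source $c\,m^q$ and set it against the pointwise flux bound $|\nabla v|^{p-1}\le c_1^{p-1}$. A volume term of order $R^n$ then beats a term of order $R^{n-1}$. This is the mechanism of Step~2 in the proof of Theorem~\ref{corr:thm1}, applied to balls or boxes inside $\{x_n>y_0\}$, with $f(u)\ge c\,u^q\ge c\,m^q$ in place of $f(\inf_{B_R}u)$.

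\textbf{What each buys.} Your argument needs no appeal to \cite{MP,BVP} and no auxiliary exponent $s$. It needs no distinction between $p<n$ and $p\ge n$, and no subcriticality or even the restriction $q>p-1$. Its cost is that the global gradient bound is used essentially in the step itself. The paper's step uses only monotonicity and the lower bound, which is the sense in which the authors call Theorem~\ref{thm:case1} independent of the a priori estimate. Both proofs rest on the same monotonicity input from \cite{FMRS,FMS,FMS2}, so your fallback discussion of slabs is unnecessary.
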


As a corollary, we obtain an unconditional nonexistence result for the ``more coercive'' case where the limit in \eqref{eq:2a} is strictly positive (in particular, for $f(t)=c_1 t^{p-1}+c_2 t^q$ with $c_1>0$, $c_2>0$, $q>p-1$).

\begin{corollary}\label{cor2}
Assume $(h_f)$ with $p\ge 2$ and a strictly positive limit in \eqref{eq:2a}:
\begin{equation}\label{eq:2aa}
\ell=\lim_{t\to 0^+}\frac{f(t)}{t^{p-1}} \in (0, +\infty),
\end{equation}
and let \eqref{eq:3a} hold. Then, for any $p>1$ and $q>p-1$, problem \eqref{eq:1a} has no non-negative nontrivial
solution $u\in W^{1,p}_{loc}(\Rp)\cap L^{q}_{loc}(\Rp)$ with $|\nabla u|\in L^\infty(\Rp)$.
\end{corollary}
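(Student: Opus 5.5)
The plan is to reduce Corollary~\ref{cor2} to Theorem~\ref{thm:case1}. Suppose a nontrivial solution $u\ge0$ with $|\nabla u|\in L^\infty(\Rp)$ exists. By Theorem~\ref{thm:case1}, $\inf_{x'}u(x',y)=0$ for every $y>0$. We will contradict this by showing that $\ell>0$ forces $u$ to be bounded below by a positive constant on some horizontal hyperplane.

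Two facts are used. First, by the a priori bound \eqref{eq:apriori0}, $u\le M$ on $\Rp$. Second, by the monotonicity results quoted from \cite{FMRS,FMS,FMS2}, $u$ is strictly increasing in $x_n$ and $u>0$ in $\Rp$ (strong maximum principle). Since $f$ is strictly positive on $(0,\infty)$, continuous, and $f(t)/t^{p-1}\to\ell>0$, there is $\mu>0$ with
\[
f(t)\ge \mu\,t^{p-1}\qquad\text{for } 0<t\le M.
\]
Hence $-\Delta_p u\ge \mu u^{p-1}$ in $\Rp$.

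The key step is a sub/supersolution comparison with the first Dirichlet eigenfunction. Let $B_\rho$ be a ball whose radius $\rho$ is chosen so that $\lambda_1(B_\rho)<\mu$; this is possible since $\lambda_1(B_\rho)=\lambda_1(B_1)\rho^{-p}$. Let $\phi>0$ be the normalized first eigenfunction on $B_\rho$ with $\|\phi\|_\infty=1$. Place such balls centered at $(x',\rho+1)$ for arbitrary $x'$, so that all of them lie in $\{x_n>1\}$.

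We would like a uniform positive lower bound for $u$ on the segment $x_n=\rho+1$. Two routes are available:

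\begin{itemize}
\item[(a)] \emph{Sliding argument.} For $\varepsilon$ small, $\varepsilon\phi$ is a strict subsolution of $-\Delta_p w= \mu w^{p-1}$. Sliding $\varepsilon\phi$ horizontally, one shows that $u\ge\varepsilon\phi(\cdot-(x',\rho+1))$ for all $x'$, with $\varepsilon$ independent of $x'$. Evaluating at the centers gives $u(x',\rho+1)\ge\varepsilon$, which contradicts Theorem~\ref{thm:case1}.
\item[(b)] \emph{Comparison by the Picone / D\'iaz--Saa inequality.} Testing $-\Delta_p u\ge\mu u^{p-1}$ against $\phi^p/u^{p-1}$ (Picone's identity) gives
\[
\int|\nabla\phi|^p\ge\mu\int\phi^p,
\]
that is, $\lambda_1(B_\rho)\ge\mu$, contradicting the choice of $\rho$. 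Since $u>0$ in the interior, this test function is admissible on any ball compactly inside $\Rp$.
\end{itemize}

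Route (b) yields the contradiction immediately, with no sliding at all. In fact it shows more: no positive supersolution of $-\Delta_p u\ge\mu u^{p-1}$ can exist on any domain containing arbitrarily large balls. Both Theorem~\ref{thm:case1} and the bound \eqref{eq:apriori0} are needed only to justify $u\le M$, and hence the linear lower bound on $f(u)$. I would present route (b) and remark that it is consistent with Theorem~\ref{thm:case1}.

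The main obstacle is the justification of Picone's identity for a $C^{1,\alpha}$ solution $u$. The function $\phi^p/u^{p-1}$ is Lipschitz with compact support in $\Rp$ because $u\ge\min_{\overline{B_\rho}}u>0$ there. After a density argument it is therefore an admissible test function. The hypothesis $p\ge2$ appears only through the cited regularity and monotonicity results, and through the requirement that the limit \eqref{eq:2aa} exist.
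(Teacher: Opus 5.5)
Your route (b) is correct and genuinely different from the paper's proof. The paper never uses an eigenfunction itself. It quotes \cite[Lemma~18]{Le}, which gives $u(x)\ge\min(Cx_n,t_0)$ once $f(t)>c_0t^{p-1}$ near $0$, and reads off the hyperplane condition \eqref{eq:H1}. It then applies Theorem~\ref{thm:case1}, whose proof rests on the monotonicity of $u$ in $x_n$ and on the Mitidieri--Pohozaev capacity estimate through Lemma~\ref{lem:reduction}. You bypass all of this. Since $u$ is a positive supersolution of $-\Delta_p u\ge\mu u^{p-1}$, testing with $\phi^p/u^{p-1}$ on one ball $\overline{B_\rho}\subset\Rp$ with $\lambda_1(B_\rho)=\lambda_1(B_1)\rho^{-p}<\mu$ gives an immediate contradiction, via the Allegretto--Huang form of Picone's identity. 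Your argument uses only interior positivity (strong maximum principle) and $C^1$ regularity. It needs neither the monotonicity results, nor Theorem~\ref{thm:case1}, nor the external lemma, nor the Dirichlet condition, and it works for every $p>1$, so it proves the statement as literally worded. The paper's route buys only coherence with its narrative, presenting the case $\ell>0$ as an instance of \eqref{eq:H1}. For this corollary, yours is the more economical and more general proof.

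Some of your surrounding remarks should be corrected.
\begin{itemize}
\item The a priori bound \eqref{eq:apriori0} is superfluous. Since $q>p-1$, \eqref{eq:3a} gives $f(t)/t^{p-1}\ge c\,t_0^{\,q-p+1}$ for $t\ge t_0$. Together with $\ell>0$, this yields $f(t)\ge\mu t^{p-1}$ for all $t>0$; this is the paper's remark that $t_0$ may be taken arbitrary.
\item Theorem~\ref{thm:case1} plays no role in route (b). The bound $u\le M$ comes from Corollary~\ref{corr:cor1} alone, so the sentence attributing it to both results should go. Monotonicity in $x_n$ is also not used.
\item Drop route (a). With $\mu>\lambda_1(B_\rho)$ there is no weak comparison principle for $-\Delta_p w=\mu w^{p-1}$ on $B_\rho$. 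So the claimed uniform bound $u\ge\varepsilon\phi(\cdot-(x',\rho+1))$ has no mechanism as stated. A sliding argument would have to be run for the original nonlinearity $f$ together with a strong comparison principle for $-\Delta_p$, which is delicate for $p\neq2$.
\end{itemize}
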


To formulate our final result, we need to introduce some classes of sequences $(x'_k,x_{k,n})$.

\begin{definition}\label{nonadmiss}
Fix $\eps\in(0,1)$ and a base height $\delta\in(0,1]$. We define three classes of sequences
$x_k=(x'_k,x_{k,n})\in\Rp$ with $x_{k,n}\to+\infty$ by the following properties:
\begin{itemize}
\item[(i)] $\displaystyle\lim_{k\to\infty}u(x'_k,x_{k,n})=0$;
\item[(ii)] $\displaystyle\frac{\pt u(x'_k,x_n)}{\pt x_n}\ \ge\ \frac{1-\eps}{x_n}\,u(x'_k,x_n)$
\quad for all $x_n\in[\delta,x_{k,n}]$;
\item[(iii)] $\displaystyle\limsup_{k\to\infty}\, u(x'_k,\delta)\,x_{k,n}^{\,1-\eps}\ >\ M,$ where $M$ is the apriori bound from (\ref{eq:apriori0}).
\end{itemize}
\end{definition}

Note that existence of sequences with property (ii) is not an extra assumption on $u$: under the zero-limit hypothesis
\begin{equation}\label{eq:2b}
\ell=\lim_{t\to 0^+}\frac{f(t)}{t^{p-1}}=0,
\end{equation}
Lemma~\ref{lem:growth} constructs, from any sequence with property (i), points along which (ii) holds.

\begin{theorem}\label{thm:case2}
Assume $(h_f)$ with the zero limit \eqref{eq:2b}, and \eqref{eq:3a}. Let $p>1$, $q>p-1$. Then
problem \eqref{eq:1a} has no non-negative nontrivial solution
$u\in W^{1,p}_{loc}(\Rp)\cap L^q_{loc}(\Rp)$ with $|\nabla u|\in L^\infty(\Rp)$, for which any 
sequence with properties (i) and (ii) has property (iii).
\end{theorem}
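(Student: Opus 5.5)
We argue by contradiction. Suppose $u\not\equiv0$ is a nonnegative solution of \eqref{eq:1a} in the stated class, and that every sequence with properties (i) and (ii) also has property (iii). The argument has three steps: rule out the bounded-below case, produce a sequence with (i) and (ii), and then let growth from (ii) collide with the a priori bound.

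First, by Theorem~\ref{thm:case1}, $u$ cannot satisfy \eqref{eq:H1}. Hence $\inf_{x'}u(x',y)=0$ for every $y>0$. Using this for a sequence of heights $y_k\to+\infty$, we pick $x'_k$ with $u(x'_k,y_k)<1/k$. This gives a sequence $x_k=(x'_k,y_k)$ with $x_{k,n}\to+\infty$ and property (i).

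Second, we invoke Lemma~\ref{lem:growth}. Under the zero-limit hypothesis \eqref{eq:2b}, it produces from any sequence with property (i) a sequence (possibly modified, still with $x_{k,n}\to\infty$ and still satisfying (i)) along which (ii) holds on $[\delta,x_{k,n}]$. Heuristically, the smallness of $f(u)/u^{p-1}$ near $0$ makes $u$ almost $p$-harmonic on vertical segments where it is small. Combined with the monotonicity $u_{x_n}>0$ from \cite{FMRS,FMS,FMS2}, this forces logarithmic derivative at least $(1-\eps)/x_n$. By our standing assumption, this sequence then also has property (iii).

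Third, integrating (ii) along the vertical segment $\{x'_k\}\times[\delta,x_{k,n}]$ gives
\[
\frac{d}{dx_n}\log u(x'_k,x_n)\ \ge\ \frac{1-\eps}{x_n},\qquad\text{hence}\qquad
u(x'_k,x_{k,n})\ \ge\ u(x'_k,\delta)\Big(\frac{x_{k,n}}{\delta}\Big)^{1-\eps}\ \ge\ u(x'_k,\delta)\,x_{k,n}^{1-\eps},
\]
using $\delta\le1$. Positivity of $u$ in the interior (strong maximum principle, or the strict monotonicity) justifies taking logarithms. Taking $\limsup$ and applying (iii) yields $\limsup_k u(x'_k,x_{k,n})>M$. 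This contradicts the a priori bound \eqref{eq:apriori0}, $\sup u\le M$, which we get from Corollary~\ref{corr:cor1} with $c_2=0$ and $f(t)\ge ct^q$. Property (i) gives an even more direct contradiction: it forces $u(x'_k,x_{k,n})\to0$, while the displayed inequality together with (iii) forces a positive $\limsup$.

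The main obstacle is the second step, namely making sure that the output of Lemma~\ref{lem:growth} is a genuine sequence satisfying (i) and (ii) simultaneously with $x_{k,n}\to\infty$, so that the hypothesis on (iii) applies. We must also check that the $\eps,\delta$ used in the lemma match those in Definition~\ref{nonadmiss}. I would first try to apply the lemma directly to the sequence built in the first step, keeping $\eps$ and $\delta$ fixed throughout. The remaining steps are just integration of a differential inequality and comparison with \eqref{eq:apriori0}.
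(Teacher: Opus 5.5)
Your argument is correct, and its Step 3 is exactly the paper's proof: integrate (ii) along $\{x'_k\}\times[\delta,x_{k,n}]$, then cap $u(x'_k,x_{k,n})$ by \eqref{eq:apriori0}, which contradicts (iii). Where you differ is the quantifier. The paper simply assumes a sequence with (i)--(iii) is given, reading ``any'' as ``at least one'' as in its introduction. Its proof therefore only shows that no sequence has all of (i)--(iii), and it mentions Lemma~\ref{lem:growth} only in passing. You read the hypothesis universally. That reading is vacuously satisfied unless some sequence with (i) and (ii) exists, and your Steps 1--2 (Theorem~\ref{thm:case1}, then the lemma) supply one, so you prove the stronger, non-vacuous version. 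Your side remark is also right and worth keeping. Property (i) together with the integrated (ii) already gives $\limsup_k u(x'_k,\delta)\,x_{k,n}^{1-\eps}=0$. So \eqref{eq:apriori0} is not needed at all, and in your reading the theorem reduces to the existence of an (i)--(ii) sequence. For Step 2, cite the proof of Lemma~\ref{lem:growth} rather than its statement. The statement \eqref{eq:10a} gives only integrated growth from height $1$ with a factor $\tfrac12$, and says nothing about (i). What you need is \eqref{eq:logder}, which is (ii) on $[\delta,Y_k]$ with $Y_k=\delta(1-\eps)^{-k/2}\to\infty$. You also need the fact that the selected index there may be taken as large as you like. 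Your Step 1 points serve as the lemma's sequence $\{x'_m\}$ at every fixed height $Y$, because monotonicity gives $u(x'_m,Y)\le u(x'_m,y_m)<1/m$ once $y_m\ge Y$. Choosing the index $N_k\ge k$ with $y_{N_k}\ge Y_k$ then gives $u(x'_{N_k},Y_k)<1/k$, so $(x'_{N_k},Y_k)$ has both (i) and (ii).
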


The technical step behind Theorem~\ref{thm:case2}, and the reason for the exponent $1-\eps$, is a
growth estimate of independent interest (Lemma~\ref{lem:growth}): whenever the infimum of $u$
vanishes on every hyperplane $\{x_n=c>0\}$, $u$ grows at least like $x_n^{1-\eps}$ along suitable vertical
segments of the form $\{(x'_k,x_n):\,\delta\le x_n\le x_{k,n}$, so that the sequence $\{x'_k,x_{k,n}\}$
has at least properties (i) and (ii); thus, if at least one such sequence has property (iii) as well,
we obtain a contradiction with the apriori bound \eqref{corr:eq7}. The three last theorems together leave open only the solutions
of \eqref{eq:1a} with $q\ge q_c(n-1,p)$, for which the infimum vanishes on every such hyperplane and no sequence of vertical lines
carries the growth
\begin{equation}\label{eq:H2}
u(x'_k,x_n)\ \ge\ \delta\,x_n^{1-\eps}\qquad(\delta\le x_n\le x_{k,n});
\end{equation}
see Remark~\ref{rem:open}.

\section{Proof of Theorem~\ref{corr:thm1} and Corollary~\ref{corr:cor1}. Examples}

\begin{proof}[Proof of Theorem~\ref{corr:thm1}]
\textbf{Step 1 (pointwise flux bound).} For a.e.\ $x$ set $a:=|{\cal A}(x,u,\nabla u)|$. By \eqref{eq:1} and
Cauchy--Schwarz,
\[
k\,a^{p'}\ \le\ {\cal A}(x,u,\nabla u)\cdot\nabla u\ \le\ a\,|\nabla u|\ \le\ a\,c_1 .
\]
If $a>0$, divide by $ka$: $\,a^{\,p'-1}\le c_1/k$. Since $p'-1=\tfrac{1}{p-1}$, raising to the power $p-1$
gives
\begin{equation}\label{corr:flux}
|{\cal A}(x,u,\nabla u)|\ \le\ (c_1/k)^{p-1}=k^{1-p}c_1^{p-1}\qquad\text{a.e. in }\Rp,
\end{equation}
trivially also when $a=0$. \emph{(This is the D'Ambrosio--Mitidieri self-improvement: no upper growth bound
on ${\cal A}$ is used.)}

\textbf{Step 2 (capacity estimate; infimum bound).} Fix $x_*=(x'_*,x_{*,n})$ with $B_{2R}(x_*)\subset\Rp$ and
the radial Lipschitz cutoff $\varphi(x)=\min\!\big(1,\tfrac{1}{R}(2R-|x-x_*|)_+\big)$, so $0\le\varphi\le1$,
$\varphi\equiv1$ on $B_R(x_*)$, $\varphi\equiv0$ off $B_{2R}(x_*)$, and $|\nabla\varphi|=\tfrac1R$ a.e.\ on the
annulus $A_R=B_{2R}(x_*)\setminus B_R(x_*)$, with $|A_R|=(2^n-1)\omega_n R^n$. Test functions may be taken
Lipschitz with compact support by mollification; the integrands converge boundedly by \eqref{corr:flux} and
continuity of $f(u)$. Hence, using the supersolution inequality, then \eqref{corr:flux}, then $|A_R|$,
\begin{equation}\label{corr:eq9}
\int_{B_{2R}(x_*)} f(u)\,\varphi\,dx
\ \le\ \int_{A_R}{\cal A}(x,u,\nabla u)\cdot\nabla\varphi\,dx
\ \le\ k^{1-p}c_1^{p-1}\cdot\tfrac1R\cdot(2^n-1)\omega_n R^n
\ =\ c_0\,k^{1-p}c_1^{p-1}\,\omega_n R^{\,n-1}.
\end{equation}
On the other hand, $\varphi\equiv1$ on $B_R(x_*)$, $f(u)\ge0$, and $f$ nondecreasing give
\begin{equation}\label{corr:eq10}
\int_{B_{2R}(x_*)} f(u)\,\varphi\,dx\ \ge\ \int_{B_R(x_*)} f(u)\,dx\ \ge\ f\!\big(\inf_{B_R(x_*)}u\big)\,\omega_n R^n.
\end{equation}
Dividing \eqref{corr:eq9}--\eqref{corr:eq10} by $\omega_n R^n>0$,
\[
f\!\big(\inf_{B_R(x_*)}u\big)\ \le\ \frac{c_0\,k^{1-p}c_1^{p-1}}{R},
\qquad\text{hence}\qquad
\inf_{B_R(x_*)}u\ \le\ f^{-1}\!\Big(\tfrac{c_0\,k^{1-p}c_1^{p-1}}{R}\Big)
\]
by the definition of the generalized inverse.

\textbf{Step 3 (infimum $\to$ supremum).} As $u\in C^1(B_R(x_*))$ with $|\nabla u|\le c_1$ and $B_R(x_*)$ is
convex, for $x,y\in B_R(x_*)$ the fundamental theorem of calculus gives $u(x)-u(y)\le c_1|x-y|\le 2c_1R$.
Taking $y$ near the infimum,
\[
\sup_{B_R(x_*)}u\ \le\ \inf_{B_R(x_*)}u+2c_1R\ \le\ f^{-1}\!\Big(\tfrac{c_0\,k^{1-p}c_1^{p-1}}{R}\Big)+2c_1R,
\]
independently of $x_*$.

\textbf{Step 4 (covering and boundary strip).} The centres with $B_{2R}(x_*)\subset\Rp$ are exactly
$\{x_{*,n}\ge2R\}$, and $\bigcup_{x_{*,n}\ge2R}B_R(x_*)=\{x_n>R\}$. Thus
\[
\sup_{\{x_n>R\}}u\ \le\ f^{-1}\!\Big(\tfrac{c_0\,k^{1-p}c_1^{p-1}}{R}\Big)+2c_1R .
\]
On the strip $\{0<x_n\le R\}$, the $c_1$-Lipschitz bound (extended to $\overline{\Rp}$ by continuity) compared
with the boundary value gives $\sup_{\{0<x_n\le R\}}u\le c_2+c_1R$. Taking the maximum proves
\eqref{corr:eq5}; $R>0$ was arbitrary.

\textbf{Step 5 (balancing).} If $f$ is strictly increasing with $f(0)=0$, $f(+\infty)=+\infty$ and $c_1>0$,
then $g(R):=R\,f(c_1R)$ is continuous, strictly increasing, $g(0^+)=0$, $g(+\infty)=+\infty$; so
\eqref{corr:eq6} has a unique root $R_*>0$. There $f(c_1R_*)=c_0k^{1-p}c_1^{p-1}/R_*$, and as $f^{-1}$ is now
the ordinary inverse, $f^{-1}\!\big(c_0k^{1-p}c_1^{p-1}/R_*\big)=c_1R_*$. Substituting $R=R_*$ into
\eqref{corr:eq5} yields \eqref{corr:eq7}. (If $c_1=0$, then $u$ is constant; letting $R\to\infty$ in
\eqref{corr:eq10} forces $f(u)\equiv0$ and \eqref{corr:eq5} reduces to $\|u\|\le c_2$.)
\end{proof}

\begin{remark}\label{rem:bound}
Only the pointwise flux bound \eqref{corr:flux} enters; no upper growth or extra regularity of ${\cal A}$ in
$(t,\xi)$ is used beyond the Carath\'eodory condition. The bound is cruder than the Mitidieri--Pohozaev
integral estimate $\int_{B_R}u^q\le cR^{\,n-\frac{pq}{q-p+1}}$, yet after balancing it yields the same sharp
exponent $\tfrac{p}{q+1}$.
\end{remark}

\begin{example}\label{ex1}
(Mean curvature operator). ${\cal A}(\xi)=\tfrac{\xi}{\sqrt{1+|\xi|^2}}$ is W-$2$-C with $k=1$, since
\[
|{\cal A}(\xi)|^2=\tfrac{|\xi|^2}{1+|\xi|^2}\le \tfrac{|\xi|^2}{\sqrt{1+|\xi|^2}}={\cal A}(\xi)\cdot\xi
\]
(cf. \cite{DM0}). Thus Theorem \ref{corr:thm1} applies to nonnegative supersolutions of $\dv\left(\frac{\nabla u}{\sqrt{1+|\nabla u|^2}}\right)\ge f(u)$
in $\Rp$ with $p=2, k=1$, estimate \eqref{corr:eq7} holds with $R_*$ defined by $Rf(c_1 R)=(2^n-1)c_1$.
For $f(t)=t^q\,(q>1)$ this gives $\|u\|_{C(\Rp)}\le c_2+(2^n-1)^{\frac{1}{q+1}}c_1^{\frac{2}{q+1}}$ up to the maximum in \eqref{corr:eq7}.
\end{example}

\begin{example}\label{ex2}
(Exponential nonlinearity). For $f(t)=e^t-1$ (strictly increasing, f(0)=0) and ${\cal A}$
W-$p$-C, \eqref{corr:eq5} with, e.g.,$R=1$ gives the bound
\[
\|u\|_{C(\Rp)}\le \max\left(c_2+c_1, \log(1+(2^n-1)k^{1-p}c_1^{p-1})+2c_1\right),
\]
and optimizing over $R$ via \eqref{corr:eq6} refines it further: for small $c_1$ the bound behaves like $c_1 \log\frac{1}{c_1}$-type
quantities. This illustrates that no polynomial structure of the nonlinearity is needed.
\end{example}

\begin{remark}\label{rem1}
Two features are inherited from \cite{DM0}: (i) no upper growth bound on $\cal A$ is assumed --
 the W-$p$-C condition \eqref{eq:1} alone produces the pointwise flux bound \eqref{corr:flux} along the solution; (ii)
no regularity of $\cal A$ in $(t,\xi)$ beyond the Carath\'eodory condition is used, since the proof involves
a single integration by parts against a cutoff. Compared with the Mitidieri--Pohozaev capacity
estimate \cite{MP, BVP}, the present argument replaces the sharp integral bound $\int_{B_R}
u^q\,dx \le cR^{n-\frac{pq}{q-p+1}}$ by the cruder \eqref{corr:eq9}; nevertheless, after balancing $R$ the resulting exponent
$\frac{p}{q+1}$ in Corollary \eqref{corr:cor1} is the same as it must be, being dictated by the scaling $u_{\lambda}(x)=\lambda^{\frac{p}{q-p+1}}u(\lambda x)$.
\end{remark}

\section{Proof of Theorems~\ref{thm2}, \ref{thm:case1} and Corollary~\ref{cor2}}

\begin{proof}[Proof of Theorem~\ref{thm2}] By \eqref{corr:eq7} with $c_1=\|\nabla u\|_{\infty}$ and $c_2=0$ (due to Dirichlet boundary condition), we have
\[
\|u\|_{C(\Rp)}\le 3(2^n-1)^{\frac{1}{q+1}}\|\nabla u\|_{\infty}^{\frac{p}{q+1}},
\]
and the claim immediately follows from nonexistence results for bounded solutions, namely, from the second parts of \cite[Thm.~1.6]{FMS}, \cite[Thm.~1.4]{FMRS} and \cite[Thm.~1.3]{FMS2}.
\end{proof}

\noindent\textbf{Reduction.}
By \cite[Thm.~4.4 (i) and Rem.~4.2]{BVP} with $\sigma=0$, the inequality
\begin{equation}\label{eq:4a}
(-\Delta_p v)(x)\ge v^{s}(x)
\end{equation}
has no non-negative nontrivial supersolution on a half-space, provided $1<p\le n$ and $p-1<s<b_p$,
where $b_p=p-1+\tfrac{p}{\beta_{p,n}}>p-1$ with some constant
$\beta_{p,n}>0$. The explicit values of $\beta_{p,n}$ are given only for $n=2$
and $p=2$, but they are not required for our argument; in particular, for monotone solutions one can take
$b_p=s_p:=\tfrac{n(p-1)}{n-p}$ if $1<p<n$ and $s_p=+\infty$ if $p\ge n$ (see the lemma below). The statement is
about the inequality alone and needs no boundary condition, since the underlying Mitidieri–Pohozaev test-function
estimate
\begin{equation}\label{eq:apriori}
\int_{B_R(x_*)} v^{\gamma}(x)\,dx\ \le\ c\,R^{\,n-\frac{p\gamma}{s-p+1}}
\qquad(\text{see \cite{MP} and \cite[proof of Theorem 3.3]{BVP}})
\end{equation}
holds for any $0\le\gamma\le q$, $R>0$ and $x_*$ such that $B_{2R}(x_*)\subset\Rp$.
This covers every shifted half-space. In particular we may assume $q\ge s_p$ in
\eqref{eq:1a}.

\begin{lemma}\label{lem:reduction}
Let $v \ge 0$ be a distributional supersolution of \eqref{eq:4a}
on a shifted half-space, nondecreasing in $x_n$; if $p-1<s<s_p$, with $s_p=\tfrac{n(p-1)}{n-p}$
for $p<n$ and $s_p=+\infty$ for $p\ge n$, then $v\equiv 0$.
\end{lemma}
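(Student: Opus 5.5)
The plan is to exploit monotonicity in $x_n$ to reduce the half-space problem to a whole-space (or cylinder-like) Liouville statement, and then apply the Mitidieri--Pohozaev capacity estimate \eqref{eq:apriori} on balls that can be made arbitrarily large.

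\textbf{Step 1 (limiting profile).} Fix $x'$. Since $v$ is nondecreasing in $x_n$, the limit $w(x')=\lim_{x_n\to\infty}v(x',x_n)\in[0,\infty]$ exists. If $v\not\equiv0$, there is a point $x_0$ with $v(x_0)=a>0$. By monotonicity $v\ge a$ on a vertical half-line, and in fact on a set of positive measure: use lower semicontinuity, or work a.e. with the distributional representative, to get $v\ge a/2$ on a small ball $B_\rho(x_0)$. Monotonicity then gives $v\ge a/2$ on the whole upward cylinder $C=B'_\rho(x_0')\times(x_{0,n}-\rho,\infty)$.

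\textbf{Step 2 (capacity estimate on tall balls).} Take $\gamma=s$ in \eqref{eq:apriori}. Choose centres $x_*=(x_0',x_{0,n}+3R)$ so that $B_{2R}(x_*)$ lies in the shifted half-space. Then $B_R(x_*)$ contains a segment of the cylinder $C$ of height $\sim R$, so
\[
\int_{B_R(x_*)}v^s\,dx\ \ge\ (a/2)^s\,\omega_{n-1}\rho^{n-1}R .
\]
Comparing with $cR^{\,n-ps/(s-p+1)}$ gives a contradiction as $R\to\infty$ whenever
\[
n-\frac{ps}{s-p+1}<1 .
\]
This only yields the $(n-1)$-dimensional range, which is weaker than the claimed $s<s_p$.

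\textbf{Step 3 (improving to $s_p$).} Instead of a thin cylinder, I would use that the lower bound $v\ge a/2$ propagates upward on a set whose horizontal section does not shrink. To obtain full $n$-dimensional mass, I would apply the weak Harnack inequality for $p$-superharmonic functions ($-\Delta_p v\ge0$) on balls $B_{2R}(x_*)$ with $x_{*,n}\ge 2R$:
\[
\inf_{B_R}v\ \ge\ c\Big(R^{-n}\int_{B_{2R}}v^{\gamma}\Big)^{1/\gamma},\qquad \gamma<\frac{n(p-1)}{n-p}.
\]
The standard Serrin--Trudinger contradiction then runs as follows. For $p<n$, a positive $p$-superharmonic function satisfies $\inf_{B_R}v\ge cR^{-(n-p)/(p-1)}$; this follows by comparison with the fundamental solution on annuli, placing $x_0$ inside the balls, for which monotonicity in $x_n$ keeps the comparison inside the domain. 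Inserting this into \eqref{eq:apriori} gives
\[
R^{\,n-s(n-p)/(p-1)}\ \lesssim\ R^{\,n-ps/(s-p+1)} .
\]
This is contradictory exactly when $s<\frac{n(p-1)}{n-p}$, after checking the exponent algebra. For $p\ge n$, positive $p$-superharmonic functions have $\inf_{B_R}v\ge c$ (for $p=n$, up to a log), which again contradicts \eqref{eq:apriori} for every $s>p-1$.

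\textbf{Main obstacle.} The delicate point is the comparison-principle lower bound $\inf_{B_R(x_*)}v\gtrsim R^{-(n-p)/(p-1)}$ on balls touching a half-space, where the comparison domain must avoid the boundary. Monotonicity in $x_n$ is what should make this work: one compares on annuli centred at $x_0$ intersected with $\{x_n>x_{0,n}-\rho\}$, and on the bottom face one uses monotonicity to bound $v$ from below by its values higher up.
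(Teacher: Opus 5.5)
Your Step~2 is already a complete proof. You abandon it only because you compared the two ranges the wrong way round. The contradiction there needs just $\kappa:=n-\frac{ps}{s-p+1}<1$. By contrast, $s<s_p$ is equivalent to $\kappa<0$, which is exactly the negative exponent the paper uses, so your range contains the claimed one. Explicitly, $\kappa<1$ means $s(n-1-p)<(n-1)(p-1)$:
\begin{itemize}
\item[] if $p\ge n-1$, this holds for every $s>p-1$;
\item[] if $p<n-1$, it holds for $s<\frac{(n-1)(p-1)}{n-1-p}$.
\end{itemize}
Since $N\mapsto\frac{N(p-1)}{N-p}$ is decreasing, the ``$(n-1)$-dimensional range'' is larger than $(p-1,s_p)$, not smaller. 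Your details are fine: $B_{2R}(x_*)$ with $x_*=(x_0',x_{0,n}+3R)$ lies in the half-space, and for $R\ge2\rho$ the set $B_R(x_*)\cap C$ has measure at least $c\rho^{n-1}R$.

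This is the paper's mechanism run in the opposite direction. The paper uses monotonicity to push the mass of $C_R=\{|x'|<R,\ 0<x_n<3R\}$ up into $C_R+(0',3R)\subset B_{\sqrt{13}R/2}((0',9R/2))$, an admissible ball, and concludes $\int_{C_R}v^s\le c'R^{\kappa}\to0$. The two versions buy different things:
\begin{itemize}
\item[] the paper's needs no pointwise positive value of $v$, hence no lower semicontinuity;
\item[] yours carries a fixed cylinder of positivity all the way up, so it gains a factor $R$ in the lower bound and hence one dimension in the admissible range of $s$.
\end{itemize}

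Step~3 should be dropped, because as written it does not work. The bound $\inf_{B_R(x_*)}v\gtrsim R^{-(n-p)/(p-1)}$ is never established, and your proposed fix uses monotonicity backwards. Since $v$ is nondecreasing in $x_n$, the values higher up bound the values on the bottom face from \emph{above}, not from below. A comparison function must lie below $v$ on the whole boundary of the comparison domain, so it has to vanish on the bottom face.

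A barrier of that kind with pole near $x_0$ decays faster than the fundamental solution. For $p=2$, the half-space Green function is of order $R^{1-n}$ on $B_R(x_*)$, versus $R^{2-n}$ for the fundamental solution. Inserting that rate into the capacity estimate gives contradiction only for $(n-1)(s-1)<2$, i.e.\ the half-space range $s<\frac{n+1}{n-1}$, not $s<s_p=\frac{n}{n-2}$. The weak Harnack inequality you quote plays no role in the chain either.
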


\begin{proof}
Under the assumptions of the lemma, the estimate \eqref{eq:apriori} with $\gamma=s=q$:
\begin{equation}\label{eq:apriori1}
\int_{B_R(x_*)} v^q(x)\,dx\ \le\ c\,R^{\,n-\frac{pq}{q-p+1}}
\end{equation}
has a negative exponent, since the inequality $n(q-p+1)<pq$ is equivalent
to $q(n-p)<n(p-1)$, which is automatic when $n\le p$ and results in $s<s_p$ when $n>p$. Applying the estimate \eqref{eq:apriori1} on balls of
arbitrarily large radius contained in the half-space and using monotonicity, for the integrals
$\int_{C_R} v^q(x)\,dx$, where $C_R=\{(x',x_n):|x'|<R, 0<x_n<3R\}$, we have
\[
\int_{C_R} v^q(x)\,dx \le \int_{{C_R}+(0',3R)} v^q(x)\,dx \le\int_{B_{\rho}((0',9R/2))} v^q(x)\,dx\,
\]
where
\[
{C_R}+(0',3R)=\{(x',x_n):|x'|<R, 3R<x_n<6R\} \subset B_{\rho}((0',9R/2)), \quad \rho=\tfrac{\sqrt{13}R}{2},
\]
and the ball $B_{\rho}((0',9R/2))$ is admissible: $B_{2\rho}((0',9R/2))=B_{\sqrt{13}}((0',9R/2))\subset\Rp$,
because $9/2>\sqrt{13}{\cal A}pprox 3,61$, so the estimate \eqref{eq:apriori1} with radius $\rho \sim R$ gives
\[
\int_{C_R} v^q(x)\,dx \le c'\,R^{\,n-\frac{pq}{q-p+1}}\to 0,
\]
whence $v \equiv 0$ in the whole half-space.
\end{proof}

Hence problem \eqref{eq:1a} has no nontrivial
nonnegative solution whatsoever when $p\ge n$, with no hypothesis beyond \eqref{eq:3a}. Thus
we may assume $1<p\le n$ and $q\ge s_p=\tfrac{n(p-1)}{n-p}$ throughout.

We also record the regularity used below. Solutions of the $p$-Laplace equation are in general
only $C^{1,\alpha}_{loc}$, and $C^{2,\beta}_{loc}$ regularity needs the gradient locally bounded
away from zero. Here this holds because $u$ is strictly monotone in $x_n$, so $\pt_{x_n}u>0$ and
$\nabla u\ne 0$ in $\Rp$.

\bigskip
\begin{proof}[Proof of Theorem~\ref{thm:case1}]
Suppose a non-negative nontrivial solution $u$ with \eqref{eq:H1} exists. By the monotonicity of
$u(x',\cdot)$ and by \eqref{eq:3a},
\begin{equation}\label{eq:6a}
(-\Delta_p u)(x)\ge c_1 u^{s}(x)\qquad \bigl(x\in\R^{n-1}\times(y_0,+\infty)\bigr)
\end{equation}
holds for any $s<q$, in particular for some $s\in(p-1,b_p)$, with
$c_1=c\,\inf_{x'} u^{\,q-s}(x',y_0)>0$ by \eqref{eq:H1}. Inequality \eqref{eq:6a} lives on the
shifted half-space $\R^{n-1}\times(y_0,+\infty)$, and $u$ does not vanish on $\{x_n=y_0\}$; it is
bounded below there. This causes no difficulty, because the reduction concerns the
inequality on a shifted half-space and imposes no boundary condition. Setting
$v=c_1^{1/(s-p+1)}u$ and translating $x_n\mapsto x_n-y_0$ turns \eqref{eq:6a} into \eqref{eq:4a} on
$\Rp$, which is impossible due to Lemma~\ref{lem:reduction}. Hence no such solution exists.
\end{proof}

\begin{proof}[Proof of Corollary~\ref{cor2}]
By \cite[Lemma~18]{Le}, for a solution $u\in C^{1,\alpha}_{\rm loc} (\Rp) \cap C(\overline{\Rp})$
of problem \eqref{eq:1a} with $f(t)>c_0 t^{p-1}, t\in (0,t_0), c_0>0, t_0>0$ (which follows in our case from assumption \eqref{eq:2a};
due to \eqref{eq:3a}, one can take arbitrary $t_0>0$), and for each $x=(x',x_n)\in\Rp$, one has $u(x)\ge \min (Cx_n,t_0)$,
which implies \eqref{eq:H1}. Then the claim follows immediately from Theorem~\ref{thm:case1}.
\end{proof}

\section{Proof of Theorem~\ref{thm:case2}}

\begin{lemma}\label{lem:growth}
Assume $(h_f)$ with a zero limit in \eqref{eq:2a} (that is, \eqref{eq:2b}) and \eqref{eq:3a}. Let $u$ be a solution of \eqref{eq:1a}
in the class above satisfying
\begin{equation}\label{eq:7a}
\inf_{x'\in\R^{n-1}} u(x',y)=0\qquad\text{for every } y>0 .
\end{equation}
Then for every $k\in\N$ and $\eps\in (0,1)$ there is a point $x'_{(k)}\in\R^{n-1}$ with
\begin{equation}\label{eq:10a}
u(x'_{(k)},x_n)>x_n^{1-\eps}\,u(x'_{(k)},1)/2 \qquad (x_n\in[1,(1-\eps)^{-k/2}]).
\end{equation}
\end{lemma}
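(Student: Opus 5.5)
We will argue by blow-down around points where $u$ is small, using the zero-limit hypothesis \eqref{eq:2b} to say that near such points the equation is essentially the homogeneous $p$-Laplace equation. By \eqref{eq:7a} with $y=(1-\eps)^{-k/2}$, we pick a sequence $x'_j$ with $u(x'_j,(1-\eps)^{-k/2})\to0$. By monotonicity in $x_n$ and $u\ge0$, this gives $u(x'_j,x_n)\to0$ uniformly on $x_n\in(0,(1-\eps)^{-k/2}]$. Set $\mu_j:=u(x'_j,1)>0$ (strict monotonicity) and consider the normalized translates
\[
w_j(y',y_n):=\frac{u(x'_j+y',y_n)}{\mu_j}.
\]
These satisfy $-\Delta_p w_j=\mu_j^{1-p}f(\mu_jw_j)$, $w_j=0$ on $\pt\Rp$, $w_j(0',1)=1$, and $w_j$ is nondecreasing in $y_n$.

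The first step is compactness. On $\Omega_k:=\{|y'|<\rho,\ 0<y_n<(1-\eps)^{-k/2}+1\}$ we want a local bound on $w_j$. We obtain it from the Harnack inequality, applied to the nonnegative supersolutions $w_j$ and combined with the boundary Harnack/comparison for $p$-harmonic-type functions vanishing on the flat boundary, normalized by $w_j(0',1)=1$. The right-hand side satisfies $\mu_j^{1-p}f(\mu_jw_j)=w_j^{p-1}\,\frac{f(\mu_jw_j)}{(\mu_jw_j)^{p-1}}$. By \eqref{eq:2b}, and since $\mu_jw_j\le\sup_{\Omega_k}u(x'_j+\cdot)\to0$ uniformly, this tends to $0$ uniformly wherever $w_j$ is bounded. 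Uniform $C^{1,\alpha}$ estimates up to the flat boundary then give a subsequence $w_j\to w$ in $C^1_{loc}$. The limit $w$ is $p$-harmonic in the half-cylinder, nonnegative, nondecreasing in $y_n$, vanishes on $\{y_n=0\}$, and satisfies $w(0',1)=1$.

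The second step is to identify the limit and derive \eqref{eq:10a}. If the cylinder can be taken arbitrarily wide (letting $\rho\to\infty$ along a diagonal subsequence), $w$ is a positive $p$-harmonic function on $\Rp$ vanishing on the boundary. The Liouville classification of such functions (Kilpeläinen–Shahgholian–Zhong) gives $w(y)=y_n$, so $w(0',y_n)=y_n\ge y_n^{1-\eps}$ for $y_n\ge1$. Uniform convergence on the compact segment $[1,(1-\eps)^{-k/2}]$ then gives $w_j(0',y_n)>y_n^{1-\eps}/2$ for $j$ large. Choosing $x'_{(k)}=x'_j$ yields exactly \eqref{eq:10a}.

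The main obstacle is the uniform local bound on $w_j$ away from the normalization point, and above all up to heights $(1-\eps)^{-k/2}$ and widths $\rho\to\infty$. Boundary Harnack for the $p$-Laplacian with a vanishing perturbation needs some care, because the perturbation is small only where $w_j$ is controlled. We would first try an iterative Harnack chain from $(0',1)$ that uses only interior Harnack for $-\Delta_p w\ge0$ together with the Lipschitz bound $|\nabla u|\le c_1$ rescaled by $\mu_j$. If this fails, a fallback avoiding the full Liouville classification is a one-dimensional comparison. Monotonicity gives $\pt_{y_n}w\ge0$, and we would compare $w$ with the subsolution $y_n^{1-\eps}$ on a dyadic ladder $[(1-\eps)^{-m/2},(1-\eps)^{-(m+1)/2}]$, which explains the base $(1-\eps)^{-1/2}$ in the statement. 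Losing at most a factor $(1-\eps)^{1/2}$ per rung is then harmless after halving.
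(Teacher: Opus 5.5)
Your skeleton matches the paper's: translate to points where $u$ is small, normalize, pass to a $p$-harmonic limit on $\Rp$ vanishing on $\pt\Rp$, and identify it via Kilpel\"ainen--Shahgholian--Zhong. Your endgame is more economical. The paper normalizes separately at each rung $y_j=(1-\eps)^{-j/2}$, uses $C^{1,\alpha'}$ convergence to get $\pt_{x_n}v_{m,j}>\sqrt{1-\eps}/y_j$ on $[y_{j-1},y_j]$, and integrates the resulting bound $\pt_{x_n}u\ge(1-\eps)u/x_n$. The ratio $(1-\eps)^{-1/2}$ between rungs is what gives $y_j\le x_n/\sqrt{1-\eps}$ there. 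You normalize once at height $1$ and need only uniform convergence $w_j(0',\cdot)\to y_n$ on $[1,(1-\eps)^{-k/2}]$. Since $y_n-\tfrac12y_n^{1-\eps}\ge\tfrac12$ for $y_n\ge1$, \eqref{eq:10a} follows at once. The paper's route additionally yields the pointwise logarithmic-derivative inequality, which Definition~\ref{nonadmiss}(ii) later uses; \eqref{eq:10a} itself does not need it.

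The genuine gap is the compactness step, which you leave open, and the remedies you list do not work as stated. Interior Harnack ``for $-\Delta_p w\ge0$'' is only the weak Harnack inequality, which gives lower bounds, not upper bounds. The rescaled Lipschitz bound $|\nabla w_j|\le c_1/\mu_j$ blows up as $\mu_j\to0$. Your worry that the perturbation is controlled only where $w_j$ is already bounded is misplaced: Harnack needs the zero-order coefficient bounded, not small. It is bounded a priori. Indeed, $c_j:=f(\mu_jw_j)/(\mu_jw_j)^{p-1}$ is $f(u)/u^{p-1}$ evaluated at $(x'_j+y',y_n)$. On $\{y_n\le T\}$ one has $u\le c_1T$, and $t\mapsto f(t)/t^{p-1}$ is bounded on $(0,c_1T]$ by continuity and \eqref{eq:2b}. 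So $w_j$ solves $-\Delta_pw_j=c_jw_j^{p-1}$ with $\sup_j\|c_j\|_{L^\infty(\{y_n\le T\})}<\infty$, and the full (Serrin--Trudinger) Harnack inequality applies. A Harnack chain from $(0',1)$ then bounds $w_j$ on every compact subset of $\{y_n\ge\theta\}$ uniformly in $j$. The layer $\{0<y_n<\theta\}$ is covered by the monotonicity you already recorded, $w_j(y',y_n)\le w_j(y',\theta)$, so no boundary Harnack is needed; this is exactly how the paper argues. Only after this bound does $\mu_jw_j\to0$ hold locally uniformly, since monotonicity along the single line $y'=0$ does not give $\sup_{\Omega_k}u(x'_j+\cdot)\to0$. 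Hence $c_j\to0$, and arbitrary widths and heights are handled by a diagonal argument, so the ``if'' in your second step is always met. Drop the fallback: $-\Delta_p\bigl(y_n^{1-\eps}\bigr)=(p-1)\eps(1-\eps)^{p-1}y_n^{-\eps(p-1)-1}>0$, so $y_n^{1-\eps}$ is a supersolution rather than a subsolution. A comparison on a strip would also need lateral boundary data, which \eqref{eq:7a} rules out.
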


\begin{proof}
Fix $k\in\N$ and put $y_j=\delta(1-\eps)^{-j/2}$, $j=0,1,\dots,k$. By \eqref{eq:7a} there is a sequence
$\{x'_m\}\subset\R^{n-1}$ with $u(x'_m,y_k)\to 0$ as $m\to\infty$. Since $u\ge 0$ and $u(x'_m,\cdot)$
is increasing, $0\le u(x'_m,y_j)\le u(x'_m,y_k)$ for $j\le k$, so $u(x'_m,y_j)\to 0$ for every $j$.
Write
\[
v_{m,j}(x)=\frac{u(x'+x'_m,x_n)}{u(x'_m,y_j)},
\]
which satisfies
\begin{equation}\label{eq:8a}
(-\Delta_p v_{m,j})(x)=c_{m,j}(x)\,v_{m,j}^{\,p-1}(x),\qquad
c_{m,j}(x)=\frac{f\bigl(u(x'+x'_m,x_n)\bigr)}{u(x'+x'_m,x_n)^{\,p-1}} .
\end{equation}
Writing $u(x'+x'_m,x_n)=u(x'_m,y_j)\,v_{m,j}(x)$, condition \eqref{eq:2a} shows that $c_{m,j}$ is
locally uniformly bounded and, because $u(x'_m,y_j)\to0$, that $c_{m,j}\to0$ locally uniformly on
any region where $v_{m,j}$ stays bounded. Also $v_{m,j}(0,y_j)=1$.

By the Harnack inequality \cite[Thm.~7.2.2]{PS}, for any compact $D\subset\Rp$ with $(0,y_j)\in D$
and any $\theta>0$,
\[
\sup_{D\cap\{x_n>\theta\}} v_{m,j}\ \le\ C_D\inf_{D\cap\{x_n>\theta\}} v_{m,j}\ \le\ C_D,
\]
and the monotonicity of $u(x',\cdot)$ upgrades this to
\begin{equation}\label{eq:10b}
\sup_D v_{m,j}\le C_D.
\end{equation}
So the $v_{m,j}$ are locally bounded in $C(D)$ and hence, by standard local bounds,
in $C^{1,\alpha}(D)$ for each $j$. A diagonal argument yields
$v_j=\lim_{m\to\infty}v_{m,j}$ in $C^{1,\alpha'}_{loc}(\Rp)$, $\alpha'\in(0,\alpha)$, along a
subsequence. By \eqref{eq:2b}, \eqref{eq:8a} and $u(x'_m,y_j)\to0$, the limit satisfies
\[
-\Delta_p v_j=0 \text{ in } \Rp,\qquad v_j=0 \text{ on } \pt\Rp,\qquad v_j(0,y_j)=1,
\]
and $v_j>0$ in $\Rp$ by the strong maximum principle \cite{Vazquez}. Moreover, since $c_{m,j}$ is
bounded up to the boundary uniformly in $m$ (by $u(x) \le Cx_n$ and the finite limit in \eqref{eq:2a}),
estimate \eqref{eq:10b} can be extended to $D\subset\overline{\Rp}$, so Lieberman's boundary $C^{1,\alpha}$
estimates \cite{Lieberman} give equicontinuity up to $\{x_n = 0\}$ and the limit $v_j$ vanishes there.

A non-negative $p$-harmonic function on $\Rp$ vanishing on $\pt\Rp$ is linear, $c\,x_n$ with
$c\ge0$. This is precisely \cite[Theorem 3.1]{KSZ}, the $p$-Laplacian counterpart of the half-space Martin representation; it asks for no
growth bound on $v_j$, which is what lets us use it here, the rescaling having destroyed the
gradient bound on $v_{m,j}$. With $v_j(0,y_j)=1$ we get $v_j(x)=x_n/y_j$, so
$\pt_{x_n}v_j\equiv 1/y_j$.

Since $v_{m,j}\to v_j$ in $C^{1,\alpha'}_{loc}$ and $\pt_{x_n}v_j\equiv 1/y_j$ on
$\{0\}\times[y_{j-1},y_j]$, we have $\pt_{x_n}v_{m,j}(0,\cdot)\to 1/y_j$ uniformly on
$[y_{j-1},y_j]$. So there is an index $m_j$ with
\[
\pt_{x_n}v_{m,j}(0,x_n)>\frac{\sqrt{1-\eps}}{y_j}\qquad\text{for all } m\ge m_j,\ x_n\in[y_{j-1},y_j].
\]
Put $M=M(k)=\max_{1\le j\le k} m_j$ and $x'_{(k)}:=x'_M$. As the inequality holds for all $m\ge m_j$, it
holds at $m=M$ for every $j=1,\dots,k$ at once. Hence, for each $j$ and each
$x_n\in[y_{j-1},y_j]$,
\[
\frac{\pt u(x'_M,x_n)}{\pt x_n}
=u(x'_M,y_j)\,\pt_{x_n}v_{M,j}(0,x_n)
\ge \frac{\sqrt{1-\eps} \cdot u(x'_M,y_j)}{y_j}
\]
\[
\ge \frac{\sqrt{1-\eps} \cdot u(x'_M,x_n)}{y_j}
\ge \frac{(1-\eps) u(x'_M,x_n)}{x_n},
\]
using the definition of $v_{M,j}$, the monotonicity of $u(x'_M,\cdot)$, and
\[
y_j=\frac{y_{j-1}}{\sqrt{1-\eps}}\le \frac{x_n}{\sqrt{1-\eps}}.
\]
As $j$ runs over $1,\dots,k$, for every $x_n\in[1,y_k]$
\begin{equation}\label{eq:logder}
\frac{\pt u(x'_M,x_n)}{\pt x_n}\ge\frac{(1-\eps) u(x'_M,x_n)}{x_n}.
\end{equation}
Integrating \eqref{eq:logder} gives $u(x'_M,x_n)\ge x_n^{1-\eps}u(x'_M,\delta)$. Since $M=M(k)$, this coincides
with \eqref{eq:10a} up to notation.
\end{proof}

\begin{proof}[Proof of Theorem~\ref{thm:case2}]
Suppose, for contradiction, that such a solution $u$ and a sequence
$\{x_k\}$ with properties (i)--(iii) exist. Recall $u\in C^1(\Rp)$ and $u>0$ in $\Rp$ (strong maximum principle
\cite{Vazquez}), so $x_n\mapsto u(x'_k,x_n)$ is positive and $C^1$ on $[\delta,x_{k,n}]$.

\emph{Step 1 (integrate (ii) to segment growth).} Fix $k$. Dividing (ii) by
$u(x'_k,x_n)>0$ gives the logarithmic-derivative bound
\[
\frac{\pt}{\pt x_n}\,\log u(x'_k,x_n)\ \ge\ \frac{1-\eps}{x_n},\qquad x_n\in[\delta,x_{k,n}].
\]
Integrating from $\delta$ to $x_{k,n}$,
\[
\log\frac{u(x'_k,x_{k,n})}{u(x'_k,\delta)}\ \ge\ (1-\eps)\,\log\frac{x_{k,n}}{\delta},
\]
that is,
\begin{equation}\label{eq:growth}
u(x'_k,x_{k,n})\ \ge\ \Big(\frac{x_{k,n}}{\delta}\Big)^{1-\eps} u(x'_k,\delta)
\ =\ \delta^{-(1-\eps)}\,u(x'_k,\delta)\,x_{k,n}^{\,1-\eps}.
\end{equation}
This is the only place property (ii) is used, and (i) is what made (ii) available (via
Lemma~\ref{lem:growth}).

\emph{Step 2 (cap by the a priori bound).} Since $(x'_k,x_{k,n})\in\Rp$, the bound
\eqref{eq:apriori0} gives $u(x'_k,x_{k,n})\le M$. Combining with \eqref{eq:growth},
\[
\delta^{-(1-\eps)}\,u(x'_k,\delta)\,x_{k,n}^{\,1-\eps}\ \le\ M,
\qquad\text{i.e.}\qquad
u(x'_k,\delta)\,x_{k,n}^{\,1-\eps}\ \le\ \delta^{\,1-\eps}\,M\ \le\ M,
\]
the last inequality because $\delta\le1$ and $1-\eps>0$. This holds for \emph{every} $k$, so
\begin{equation}\label{cap}
\limsup_{k\to\infty}\, u(x'_k,\delta)\,x_{k,n}^{\,1-\eps}\ \le\ M .
\end{equation}

\emph{Step 3 (contradiction with (iii)).} Property (iii) of the sequence $(x'_k,\delta)\,x_{k,n})$ asserts
\[
\limsup_{k\to\infty} u(x'_k,\delta)\,x_{k,n}^{\,1-\eps}> M,
\]
which is incompatible with
\eqref{cap}. This contradiction shows that no non-negative nontrivial solution in the stated
class admits a sequence with properties (i)--(iii).
\end{proof}

\begin{remark}\label{rem:open}
The theorem excludes only solutions that \emph{admit} a sequence with all three properties (i)--(iii), and this
conditionality is genuine, not an artefact of the write-up. Lemma~\ref{lem:growth} always produces, for
each $k$, a point $x'_{(k)}$ with segment growth on $[\delta,x_{k,n}]$. Since
$u(x'_k,x_{k,n})\le M$, the segment growth (ii) forces $u(x'_k,\delta)\,x_{k,n}^{\,1-\eps} \le
\delta^{1-\eps}M\le M$; equivalently the base value decays at least like $\rho^{-k}$, so property (iii) fails.
To close the problem for $q\ge q_c(n-1,p)$ one would have to produce a
\emph{single} vertical line $\{x'=x'_*\}$ carrying the growth \eqref{eq:growth} with a base value
$u(x'_*,\delta)\ge\delta_0>0$ bounded below, independent of $k$; whether the a priori bound forces
such a line is an open question (for $p=2$ the full nonexistence for every $q>1$ is known
\cite{DSS}).
\end{remark}

\noindent{\bf Acknowledgment.} The authors thank Professor Enzo Mitidieri (University of Trieste) for fruitful discussion.

\end{document}